\def\@cite#1#2{[{{\bfseries #1}\if@tempswa , #2\fi}]}
\renewcommand{\section}{%
\@startsection{section}{1}{\z@}
{0.5truecm plus -1ex minus -.2ex}%
{1.0ex plus .2ex}{\bfseries\large}}
\def\@seccntformat#1{\csname the#1\endcsname.\ }
\numberwithin{equation}{section} 
\newtheorem{thm}{Theorem}[section]
\newtheorem{lem}[thm]{Lemma}
\theoremstyle{definition}
\newtheorem{df}{Definition}[section]
\newtheorem{remark}{Remark}[section]
\newtheorem*{ex1}{Example 2.1 
(The porous media equation, the fast diffusion equation)}
\newtheorem*{ex2}{Example 2.2 (The Stefan problem)}
\newtheorem*{prth1.1a}{Proof of Theorem 1.1 (Existence and uniqueness)}
\newtheorem*{prth1.1b}{Proof of Theorem 1.1 (Error estimate)}
\newtheorem*{prth1.2}{Proof of Theorem 1.2}
\newtheorem*{prth1.3}{Proof of Theorem 1.3}
\newcommand{\ep}{\varepsilon}
\newcommand{\pa}{\partial}
\newcommand{\RN}{\mathbb{R}^N}
\begin{document}
\footnote[0]
    {2010{\it Mathematics Subject Classification}\/. 
    Primary: 35K59, 35K35; Secondary: 47H05.
    }
\footnote[0]
    {{\it Key words and phrases}\/: 
    nonlinear diffusion equations; porous media equations; fast diffusion equations; Stefan problems; subdifferential operators; Cahn--Hilliard systems.
    }
\begin{center}
    \Large{{\bf Nonlinear diffusion equations as asymptotic limits of \\
Cahn--Hilliard systems on unbounded domains \\
via Cauchy's criterion
           }}
\end{center}
\vspace{5pt}
\begin{center}
    Takeshi Fukao%
    \footnote{Partially supported by Grant-in-Aid for
    Scientific Research (C), No.\,17K05321, JSPS.}\\
    \vspace{2pt}
    Department of Mathematics, 
    Kyoto University of Education\\ 
    1, Fujinomori, Fukakusa, Fushimi-ku, Kyoto 612-8522, Japan\\
    {\tt fukao@kyokyo-u.ac.jp}\\
    \vspace{12pt}
    Shunsuke Kurima\\
    \vspace{2pt}
    Department of Mathematics, 
    Tokyo University of Science\\
    1-3, Kagurazaka, Shinjuku-ku, Tokyo 162-8601, Japan\\
    {\tt shunsuke.kurima@gmail.com}\\
    \vspace{12pt}
    Tomomi Yokota%
   \footnote{Corresponding author}
   \footnote{Partially supported by Grant-in-Aid for
    Scientific Research (C), No.\,16K05182, JSPS.}\\
    \vspace{2pt}
    Department of Mathematics, 
    Tokyo University of Science\\
    1-3, Kagurazaka, Shinjuku-ku, Tokyo 162-8601, Japan\\
    {\tt yokota@rs.kagu.tus.ac.jp}\\
    \vspace{2pt}
\end{center}
\begin{center}    
    \small\today
\end{center}

\vspace{2pt}
\newenvironment{summary}
{\vspace{.5\baselineskip}\begin{list}{}{%
     \setlength{\baselineskip}{0.85\baselineskip}
     \setlength{\topsep}{0pt}
     \setlength{\leftmargin}{12mm}
     \setlength{\rightmargin}{12mm}
     \setlength{\listparindent}{0mm}
     \setlength{\itemindent}{\listparindent}
     \setlength{\parsep}{0pt}
     \item\relax}}{\end{list}\vspace{.5\baselineskip}}
\begin{summary}
{\footnotesize {\bf Abstract.}
    This paper develops an abstract theory for subdifferential operators 
    to give existence and uniqueness of solutions to    
    the initial-boundary problem \ref{P} for 
    the nonlinear diffusion equation in an {\it unbounded} domain 
    $\Omega\subset\RN$ ($N\in{\mathbb N}$), 
    written as
    \[
        \frac{\partial u}{\partial t} + (-\Delta+1)\beta(u) 
        = g \quad \mbox{in}\ \Omega\times(0, T), 
    \]
    %
    which represents the porous media, the fast diffusion  equations, etc., 
    where 
    $\beta$ is a single-valued maximal monotone function on 
    $\mathbb{R}$, and $T>0$.   
    In \cite{KY1} and \cite{KY2} existence and uniqueness for \ref{P} 
    were directly proved 
    under a growth condition for $\beta$ 
    even though the Stefan problem was excluded from examples of \ref{P}. 
    This paper completely removes the growth condition for $\beta$ 
    by confirming Cauchy's criterion for solutions of 
    the following approximate problem \ref{Pep} 
    with approximate parameter $\ep>0$: 
    \[
    \frac{\partial u_{\ep}}{\partial t} 
        + (-\Delta+1)(\ep(-\Delta+1)u_{\ep} + \beta(u_{\ep}) + \pi_{\ep}(u_{\ep}))
        = g \quad \mbox{in}\ \Omega\times(0, T), 
    \]
    which is called the Cahn--Hilliard system,  
    even if $\Omega \subset \RN$ ($N \in \mathbb{N}$) 
    is an {\it unbounded} domain. 
    Moreover, it can be seen that 
    the Stefan problem excluded from \cite{KY1} and \cite{KY2} is 
    covered in the framework of this paper.}
\end{summary}
\vspace{10pt}

\newpage

\section{Introduction and results} \label{Sec1}

Nonlinear diffusion equations have been studied since a long time ago. 
In particular, the problems on bounded domains and $\RN$ 
have been often considered for the equations. 
This paper will focus on the case of unbounded domains. 

In the case that $\Omega$ is a bounded domain 
in $\RN$, 
the nonlinear diffusion equation
\begin{align}\label{E}\tag*{(E)}
\frac{\partial u}{\partial t} -\Delta\beta(u) = g \quad \mbox{in}\ 
\Omega\times(0, T)
\end{align}
is studied by many mathematicians, 
where $\beta : \mathbb{R}\to\mathbb{R}$ is a maximal monotone function 
and $T>0$. 
Recently, in \cite{CF-2015} and \cite{CF-2016}  Colli and Fukao 
considered the Cahn--Hilliard type of approximate equation    
\begin{align}\label{Eep}\tag*{(E)$_{\ep}$}
\frac{\partial u_{\ep}}{\partial t} 
-\Delta(-\ep\Delta u_{\ep} + \beta(u_{\ep}) + \pi_{\ep}(u_{\ep})) = g 
\quad \mbox{in}\ 
\Omega\times(0, T), 
\end{align}
where $\pi_{\ep}$ is an anti-monotone function which goes to $0$ 
in some sense as $\ep \searrow 0$, 
and used one more approximation 
\begin{align}\label{Eeplamb}\tag*{(E)$_{\ep, \lambda}$}
\frac{\partial u_{\ep, \lambda}}{\partial t} 
-\Delta\Bigl(\lambda\frac{\partial u_{\ep, \lambda}}{\partial t}
           -\ep\Delta u_{\ep, \lambda} + \beta_{\lambda}(u_{\ep, \lambda}) 
           + \pi_{\ep}(u_{\ep, \lambda})\Bigr) 
= g \quad \mbox{in}\ \Omega\times(0, T), 
\end{align}
where $\beta_{\lambda}$ ($\lambda > 0$) 
is the Yosida approximation of $\beta$. 
They first in \cite{CF-2015} proved existence of solutions to \ref{Eeplamb} 
by the compactness method 
for doubly nonlinear evolution inclusions 
(see e.g., Colli and Visintin \cite{CV-1990}): 
    \[
    Au'(t) + \partial\psi(u(t)) \ni k(t)
    \]
with some bounded monotone operator $A$ and 
subdifferential operator $\partial\psi$ 
of a proper lower semicontinuous convex function $\psi$. 
They next in \cite{CF-2016} obtained existence of solutions to \ref{Eep} 
and \ref{E} by passing to the limit in \ref{Eeplamb} as $\lambda \searrow 0$ 
and in \ref{Eep} as $\ep \searrow 0$ individually. 
Although it is known that existence of solutions to \ref{E} 
can be directly proved 
under a growth condition for $\beta$ (see e.g., \cite[p.\ 205]{Barbu2}), 
in \cite[Section 6]{CF-2016} they used the above approach 
whose idea is based on the idea in Fukao \cite{F-2015} 
to obtain existence and estimates for \ref{E} 
without the growth condition for $\beta$ 
(the proof of existence of solutions to \ref{Eeplamb} does not need this condition),  
see also \cite{F-2015, F-2017} in the case of dynamic boundary conditions. 
A class of doubly nonlinear degenerate parabolic equations generalizing 
\ref{E} on bounded domains was studied by using maximal monotone operators 
in 
Damlamian \cite{D-1977}, 
Kenmochi \cite{Kenmochi-1990}, Kubo--Lu \cite{KL-2005} 
and so on; see also  
Droniou--Eymard--Talbot \cite{DET-2016}. 
Another approach to nonlinear diffusion equations via cross-diffusion systems 
was recently built by Murakawa \cite{Murakawa-2007, Murakawa-2017}, 
whose approach is versatile and easy-to-implement.
In comparison with the Cahn--Hilliard approximation as in 
\cite{CF-2016, F-2017}, 
the methods by \cite{D-1977, DET-2016, 
Kenmochi-1990, KL-2005, Murakawa-2007, Murakawa-2017} 
require the growth condition for $\beta$. 

On the other hand, in the case that 
$\Omega$ is an {\it unbounded} domain in $\RN$, 
nonlinear diffusion equations are not so sufficiently studied 
from a viewpoint of the operator theory, 
whereas in the case that $\Omega = \RN$ the equations are studied 
by the method of real analysis (see e.g., \cite{Deg}). 
The case of unbounded domains would be important 
in both mathematics and physics. 
This paper is concerned the initial-boundary value problem 
for nonlinear diffusion equations
%
%
 \begin{equation*}\tag*{(P)}\label{P}
     \begin{cases}
         \dfrac{\partial u}{\partial t}+(-\Delta+1)\beta(u) = g  
         & \mbox{in}\ \Omega\times(0, T),
 \\[2mm]
         \partial_{\nu}\beta(u) = 0                                   
         & \mbox{on}\ \partial\Omega\times(0, T),
 \\[1mm]
        u(0) = u_0                                         
         & \mbox{in}\ \Omega
     \end{cases}
 \end{equation*} 
by passing to the limit in the following 
Cahn--Hilliard system as $\ep \searrow 0$:
%
%
%
 \begin{equation*}\tag*{(P)$_{\ep}$}\label{Pep}
     \begin{cases}
         \dfrac{\partial u_{\ep}}{\partial t} 
         + (-\Delta+1)\mu_{\ep} = 0 
         & \mbox{in}\ \Omega\times(0, T),
     \\[2mm] 
         \mu_{\ep}=\ep(-\Delta+1)u_{\ep} 
         + \beta(u_{\ep})+\pi_{\ep}(u_{\ep})-f 
         & \mbox{in}\ \Omega\times(0, T),      
     \\[1mm]
         \partial_{\nu}{\mu_{\ep}} = \partial_{\nu}{u_{\ep}} = 0 
         & \mbox{on}\ \partial\Omega\times(0, T),
     \\[1mm]
        u_{\ep}(0) = u_{0\ep} 
        & \mbox{in}\ \Omega,
     \end{cases}
 \end{equation*}
where $\Omega$ is an {\it unbounded} domain 
in $\RN$ ($N \in{\mathbb N}$) with smooth bounded boundary $\partial\Omega$ 
(e.g., $\Omega = \mathbb{R}^{N}\setminus \overline{B(0, R)}$,  
where $B(0, R)$ is the open ball with center $0$ and radius $R>0$) or $\Omega=\mathbb{R}^{N}$ 
or $\Omega=\mathbb{R}_{+}^{N}$, 
$T>0$, and $\pa_\nu$ denotes differentiation with respect to 
the outward normal of $\pa\Omega$,  
under the conditions (C1)-(C4) given later. 
In this context there are two recent works \cite{KY1} and \cite{KY2} 
which dealt with \ref{P} and \ref{Pep} 
on unbounded domains. 
In \cite{KY1} and \cite{KY2} 
existence and estimates for \ref{P} 
could be directly proved by regarding \ref{P} as 
nonlinear evolution equations of the form 
 $$
 u'(t) + \partial\phi(u(t)) = \ell(t) \quad \mbox{in}\ 
\left(H^{1}(\Omega)\right)^*
 $$
with a proper lower semicontinuous convex function $\phi$ defined well and by applying monotonicity methods (Br\'ezis \cite{Brezis}) which are useful methods for unbounded domains. 
In \cite{KY2} the growth condition for $\beta$ was imposed as 
\begin{align}\label{2-growth}
\int_0^r\beta(s)\,ds \geq c|r|^2 \quad \mbox{for all}\ r \in \mathbb{R} 
\end{align}
with some constant $c>0$, and $\beta$ admits the example  
$\beta(r) = |r|^{q-1}r + r$, 
where $q >0, q\neq1$. 
In \cite{KY1} the growth condition for $\beta$ was assumed as follows:  
\begin{align}\label{m-growth}
\int_0^r\beta(s)\,ds \geq c|r|^{m} \quad \mbox{for all}\ r \in \mathbb{R} 
\end{align}
with some constant $c>0$ and $m>1$, and 
$\beta$ includes the typical example 
$$
\beta(r) = |r|^{q-1}r,
$$
where $q>0$ ($q > 1$: the porous media equation 
(see, e.g., \cite{ASS-2016, M-2010, V-2007, Y-2008}), 
$0<q<1$: the fast diffusion equation 
(see, e.g., \cite{B-1983, RV-2002, V-2007})). 
However,  
the examples in \cite{KY1, KY2} exclude the Stefan problem 
(see, e.g., \cite{BA-2005, D-1977, Fri-1968, FKP-2004, F-2015, HK-1991}): 
$$
\beta(r) = 
\begin{cases}
k_s r &\mbox{if}\ r<0, \\
0     &\mbox{if}\ 0 \leq r \leq L, \\
k_{\ell}(r-L) &\mbox{if}\ r > L, 
\end{cases} 
$$
since this $\beta$ 
does not satisfy \eqref{2-growth}, \eqref{m-growth}. 
This is due to a direct approach to \ref{P} in \cite{KY1, KY2}.

\newpage

The purpose of this paper is to 
remove the growth condition for $\beta$ such as 
\eqref{2-growth}, \eqref{m-growth} completely 
and provide a new existence result for \ref{P}.  
To this end we turn our eyes to the fact that 
\ref{Pep} is solvable without such growth condition for $\beta$ 
by the help of the approximation term 
$\ep(-\Delta+1)u_{\ep} + \pi_{\ep}(u_{\ep})$ 
and regard \ref{P} as an asymptotic limit of \ref{Pep} as $\ep \searrow 0$.  
As a consequence, 
the Stefan problem can be included in examples of \ref{P} 
even if $\Omega$ is {\it unbounded}.   
To describe the result we introduce conditions, notations and definitions. 
We will assume the following four conditions:

%
%
%
 \begin{enumerate} 
 \item[(C1)] The following conditions (C1a) and (C1b) hold: 
 \begin{enumerate}
 \item[(C1a)] $\beta : \mathbb{R} \to \mathbb{R}$                                
 is a single-valued maximal monotone function 
 and 
 $$
 \beta(r) = \hat{\beta}\,'(r) = \partial\hat{\beta}(r), 
 $$
 where 
 $\hat{\beta}\,'$ and $\partial\hat{\beta}$ 
respectively denote the differential and  
 subdifferential 
 of a proper differentiable (lower semicontinuous) convex function 
 $\hat{\beta} : \mathbb{R} \to [0, +\infty]$ 
 satisfying $\hat{\beta}(0) = 0$. 
 This entails $\beta(0) = 0$. 
 \item[(C1b)] 
 For all $z \in H^1(\Omega)$, if $\hat{\beta}(z) \in L^1(\Omega)$,  
 then $\beta(z) \in L_{{\rm loc}}^1(\Omega)$.  
 For all $z \in H^1(\Omega)$ and 
 all $\psi \in C_{\mathrm{c}}^{\infty}(\Omega)$, 
 if $\hat{\beta}(z)\in L^1(\Omega)$,  
 then $\hat{\beta}(z + \psi) \in L^1(\Omega)$.
 \end{enumerate}
 \item[(C2)] $g\in L^2\bigl(0, T; L^2(\Omega)\bigr)$. 
 Then we fix a solution 
 $f\in L^2\bigl(0, T; H^2(\Omega)\bigr)$ of 
 \begin{equation*}
     \begin{cases}
         (-\Delta+1)f(t) = g(t) 
         & \mbox{a.e.\ on}\ \Omega,
     \\[2mm] 
         \partial_{\nu}f(t) = 0 
         & \mbox{in the sense of traces on}\ \partial\Omega 
     \end{cases}
 \end{equation*}
 for a.a.\ $t\in(0, T)$, that is, 
 \begin{equation*}
 \int_{\Omega}\nabla f(t)\cdot\nabla z + 
 \int_{\Omega}f(t)z\ = \int_{\Omega}g(t)z \quad 
 \mbox{for all}\ z\in H^1(\Omega). 
 \end{equation*}
 \item[(C3)] $\pi_{\ep} : \mathbb{R} \to \mathbb{R}$ is                          
 a Lipschitz continuous function and $\pi_{\ep}(0) = 0$ 
 for all $\ep\in(0, 1]$. 
 Moreover, there exist a constant $c_{1}>0$ and 
 a strictly increasing continuous function 
 $\sigma : [0, 1] \to [0, 1]$ such that 
 $\sigma(0) = 0$, $\sigma(1) = 1$, $c_1\sigma(\ep) < \ep$ and 
   \begin{equation*}
   \bigl|\pi_{\ep}'\bigr|_{L^{\infty}(\mathbb{R})} 
   \leq c_{1}\sigma(\ep) 
   \quad \mbox{for all}\ \ep\in(0, 1].
   \end{equation*} 
 Moreover, $r\mapsto\frac{\ep}{2}r^2+\hat{\pi}_{\ep} (r)$ 
 is convex for all $\ep \in (0, 1]$, where 
 $\hat{\pi}_{\ep} (r):=\int_{0}^{r}\pi_{\ep}(s)\,ds$.
 \item[(C4)] $u_{0}\in L^2(\Omega)$ 
 and 
 $\hat{\beta}(u_{0})\in L^1(\Omega)$. 
 Also, $u_{0\ep}\in H^1(\Omega)$ 
 fulfills $\hat{\beta}(u_{0\ep})\in L^1(\Omega)$, 
 $|u_{0\ep}|_{L^2(\Omega)}^2 \leq c_{2}$, 
 $\int_{\Omega}\hat{\beta}(u_{0\ep}) \leq c_{2}$,  
 $\ep|\nabla u_{0\ep}|_{(L^2(\Omega))^N}^2 \leq c_{2}$ 
 for all $\ep\in(0, 1]$, where $c_{2} > 0$ 
 is a constant independent of $\ep$; 
 in addition, 
 $u_{0\ep}\to u_{0}$ in $L^2(\Omega)$ as 
 $\ep\searrow0$.
 \end{enumerate}
 %
 %
 %
\begin{remark}
The condition (C1b) and 
the convexity of $r\mapsto\frac{\ep}{2}r^2+\hat{\pi}_{\ep} (r)$ 
in the condition (C3) are useful 
in proving that $(-\Delta+1)\mu_\ep$ in \ref{Pep} 
can be represented by a subdifferential of some convex function 
when $\Omega$ is unbounded (see \cite[Lemma 4.2]{KY1}). 
Also, in this paper, it is an essential assumption that 
$\beta$ is single-valued. The multi-valued case will be discussed 
in our future work. 
Moreover, the condition for $\Omega$ 
is assumed in order to use the elliptic regularity.
\end{remark}

 %
 %
 %
We put the spaces $H, V, W$ as follows: 
   $$
   H:=L^2(\Omega), \quad V:=H^1(\Omega), \quad 
   W:=\bigl\{z\in H^2(\Omega)\ |\ \partial_{\nu}z = 0 \quad 
   \mbox{a.e.\ on}\ \partial\Omega\bigr\}. 
   $$
 Then $H$ and $V$ are Hilbert spaces 
 with inner products $(\cdot, \cdot)_H$ and $(\cdot, \cdot)_V$, respectively. 
 The notation $V^{*}$ denotes the dual space of $V$ with 
 duality pairing $\langle\cdot, \cdot\rangle_{V^*, V}$. 
 Moreover we define a bijective mapping $F : V \to V^{*}$ and 
 an inner product in $V^{*}$ as 
    \begin{align}
    &\langle Fv_{1}, v_{2} \rangle_{V^*, V} := 
    (v_{1}, v_{2})_{V} \quad \mbox{for all}\ v_{1}, v_{2}\in V, 
    \label{defF}
    \\[1mm]
    &(v_{1}^{*}, v_{2}^{*})_{V^{*}} := 
    \left\langle v_{1}^{*}, F^{-1}v_{2}^{*} 
    \right\rangle_{V^*, V} 
    \quad \mbox{for all}\ v_{1}^{*}, v_{2}^{*}\in V^{*};
    \label{innerVstar}
    \end{align}
 note that $F : V \to V^{*}$ is well-defined by 
 the Riesz representation theorem. 
 We remark that (C2) implies 
 $Ff(t)=g(t)$ for a.a.\ $t\in(0, T)$.

We define weak solutions of \ref{P} as follows.
%
%
%
 \begin{df}         
 A pair $(u, \mu)$ with 
    \begin{align*}
    &u\in H^1(0, T; V^{*})\cap L^{\infty}(0, T; H), 
    \\
    &\mu\in L^2(0, T; V)
    \end{align*}
 is called a {\it weak solution} of \ref{P} 
 if $(u, \mu)$ satisfies 
    \begin{align}
        & \bigl\langle u'(t), z\bigr\rangle_{V^{*}, V} + 
          \bigl(\mu(t), z\bigr)_{V} = 0 \quad 
          \mbox{for all}\ z \in V\ \mbox{and a.a}.\ t\in(0, T), 
          \label{de4}
     \\[0mm]
        & \mu(t) = \beta(u(t)) - f(t) \quad \mbox{in}\ V 
          \quad \mbox{for a.a.}\ t\in(0, T), \label{de5}
     \\[0mm]
        & u(0) = u_{0} \quad \mbox{a.e.\ on}\ \Omega. \label{de6}
     \end{align}
 \end{df}

Now the main result reads as follows: 
 \begin{thm}\label{maintheorem1}
 Let $T>0$. 
 Assume {\rm (C1)-(C4)}. 
 Then there exists a unique weak solution $(u, \mu)$ of {\rm \ref{P}}, 
 satisfying                                                 
     \begin{equation*}
        u \in H^1(0, T; V^{*})\cap L^{\infty}(0, T; H),
         \quad \mu \in L^2(0, T; V) 
     \end{equation*}
 and there exists a constant $M>0$ 
 such that 
     \begin{align}
     &|u(t)|_{H}^2 
     \leq M, \label{es0} 
     \\[1mm]
     &\int_{0}^{t}\bigl|u'(s)\bigr|_{V^{*}}^2\,ds 
     \leq M, \label{es1} 
     \\
     &\int_{0}^{t}|\mu(s)|_{V}^2\,ds \leq M, \label{es2} 
     \\
     &\int_{0}^{t}|\beta(u(s))|_{V}^2\,ds 
     \leq M \label{es3}
     \end{align}
 for all $t\in[0, T]$. 
 Moreover, in {\rm (C4)} assume further that 
 $$|u_{0\ep}-u_{0}|^2_{V^{*}} \leq c_{3}\ep^{1/2}$$ 
 for some constant $c_{3} > 0$ and 
 let $(u_{\ep}, \mu_{\ep})$ be a weak solution of {\rm \ref{Pep}} 
 for $\ep \in (0, \overline{\ep}]$ (see Section \ref{Sec3} below).
 Then there exists a constant $C^{*}>0$ such that 
 for all $\ep \in (0, \overline{\ep}]$, 
     \begin{equation}\label{eres}
        |u_{\ep}-u|^2_{C([0, T]; V^{*})} + 
        2\int_{0}^{T} 
                   \bigl(\beta(u_{\ep}(s)) - \beta(u(s)), u_{\ep}(s) - u(s)\bigr)_{H}\,ds 
        \leq C^{*}\ep^{1/2}.  
     \end{equation} 
 \end{thm}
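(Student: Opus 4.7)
The plan is to obtain $(u,\mu)$ as the asymptotic limit of approximate solutions $(u_\ep,\mu_\ep)$ of \ref{Pep} (whose existence and uniform-in-$\ep$ bounds are established in Section~\ref{Sec3}), by verifying a Cauchy criterion for $\{u_\ep\}_{\ep\in(0,\overline{\ep}]}$ in $C([0,T];V^*)$. From Section~\ref{Sec3} I would assume uniform bounds of the form
\[
\|u_\ep\|_{L^\infty(0,T;H)}+\|u_\ep'\|_{L^2(0,T;V^*)}+\|\mu_\ep\|_{L^2(0,T;V)}+\|\beta(u_\ep)\|_{L^2(0,T;V)}+\sqrt{\ep}\,\|u_\ep\|_{L^\infty(0,T;V)}\le C,
\]
which by weak and weak-$*$ lower semicontinuity will ultimately yield the estimates \eqref{es0}--\eqref{es3} once a limit $u$ is identified.

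For the Cauchy step, subtracting the first equation of \ref{Pep} at parameters $\ep,\ep'\in(0,\overline{\ep}]$ gives $(u_\ep-u_{\ep'})'+F(\mu_\ep-\mu_{\ep'})=0$ in $V^*$. Testing against $F^{-1}(u_\ep-u_{\ep'})\in V$ and using the identities $\langle Fv_1,v_2\rangle_{V^*,V}=(v_1,v_2)_V$ and $\langle w,v\rangle_{V^*,V}=(v,F^{-1}w)_V$ (together with $u_\ep-u_{\ep'}\in H$), one obtains
\[
\tfrac12\tfrac{d}{dt}|u_\ep-u_{\ep'}|_{V^*}^2+(u_\ep-u_{\ep'},\mu_\ep-\mu_{\ep'})_H=0.
\]
Substituting the second equation of \ref{Pep} and integrating by parts using the Neumann boundary condition, the $H$-inner product decomposes into the nonnegative monotonicity contribution $(\beta(u_\ep)-\beta(u_{\ep'}),u_\ep-u_{\ep'})_H$, a regularization contribution $\ep(u_\ep-u_{\ep'},u_\ep)_V-\ep'(u_\ep-u_{\ep'},u_{\ep'})_V$, and a perturbation from $\pi_\ep,\pi_{\ep'}$ controlled by $c_1(\sigma(\ep)+\sigma(\ep'))$ using the Lipschitz bound $|\pi_\ep'|_{L^\infty}\le c_1\sigma(\ep)$ and the $L^\infty(0,T;H)$-bound on $u_\ep$. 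Applying Young's inequality (exploiting the strict condition $c_1\sigma(\ep)<\ep$ to absorb part of the regularization term), invoking the pointwise bound $\sqrt{\ep}\,|u_\ep|_V\le C$, and using the hypothesis $|u_{0\ep}-u_0|_{V^*}^2\le c_3\ep^{1/2}$, integration in time followed by Gronwall's lemma then yields
\[
|u_\ep-u_{\ep'}|_{C([0,T];V^*)}^2+2\int_0^T(\beta(u_\ep)-\beta(u_{\ep'}),u_\ep-u_{\ep'})_H\,ds\le C\max(\ep,\ep')^{1/2}.
\]

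This Cauchy bound produces a strong limit $u\in C([0,T];V^*)$ of $\{u_\ep\}$, which together with weak-$*$ compactness from the a priori bounds furnishes weak or weak-$*$ limits for $u_\ep'$, $\mu_\ep$, and $\beta(u_\ep)$ in the appropriate spaces. Identifying the weak $L^2(0,T;V)$-limit of $\beta(u_\ep)$ as $\beta(u)$---the key step on an unbounded domain---would be accomplished by extracting a pointwise-a.e.\ convergent subsequence $u_\ep\to u$ via strong $V^*$-convergence plus a diagonal argument, and then applying a Minty/semicontinuity argument that uses the vanishing of the monotonicity integral above. Uniqueness follows by testing the difference of two weak solutions against $F^{-1}$ of the difference and using monotonicity of $\beta$. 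The error estimate \eqref{eres} is then obtained by letting $\ep'\searrow 0$ in the Cauchy estimate, using strong convergence $u_{\ep'}\to u$ in $C([0,T];V^*)$ for the first term and weak lower semicontinuity for the monotonicity integral. The main obstacle I anticipate is the identification $\beta(u_\ep)\rightharpoonup\beta(u)$ on the unbounded $\Omega$, where Rellich compactness is unavailable: the combination of strong $V^*$-convergence with the strong-monotonicity information encoded in the second left-hand term of the Cauchy estimate, together with the single-valuedness of $\beta$ and condition (C1b), is what I expect to unlock this step. A secondary difficulty is squeezing the explicit $\ep^{1/2}$ rate out of the $\ep(-\Delta+1)u_\ep$ regularizer, which will require careful bookkeeping in the Young-inequality splitting to balance the strict inequality $c_1\sigma(\ep)<\ep$ against the $\sqrt{\ep}$-scaling of $|u_\ep|_V$.
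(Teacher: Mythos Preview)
Your overall architecture matches the paper's: establish a Cauchy criterion in $C([0,T];V^*)$ for $\{u_\ep\}$, then use weak compactness and a Minty-type identification of $\beta(u)$. However, your treatment of the regularization term in the Cauchy step has a genuine gap.

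You propose to integrate by parts, obtaining $\ep(u_\ep-u_{\ep'},u_\ep)_V-\ep'(u_\ep-u_{\ep'},u_{\ep'})_V$, and to control this using only the bound $\sqrt{\ep}\,|u_\ep|_V\le C$. This does not work. After expanding, the quantity that must be bounded from above is
\[
-\ep|u_\ep|_V^2-\ep'|u_{\ep'}|_V^2+(\ep+\ep')(u_\ep,u_{\ep'})_V,
\]
and with only $\ep|u_\ep|_V^2,\,\ep'|u_{\ep'}|_V^2\le C^2$ the cross term cannot be absorbed uniformly in the ratio $\ep/\ep'$. Indeed, saturating the bounds with $|u_\ep|_V=C/\sqrt{\ep}$, $|u_{\ep'}|_V=C/\sqrt{\ep'}$ and taking the inner product extremal gives the value $C^2\bigl(\sqrt{\ep/\ep'}+\sqrt{\ep'/\ep}-2\bigr)$, which stays bounded away from zero whenever $\ep/\ep'$ is bounded away from $1$. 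No Young splitting repairs this with the $V$-bound alone, and the condition $c_1\sigma(\ep)<\ep$ concerns only $\pi_\ep$ and plays no role here (nor is Gronwall needed, since no $|u_\ep-u_{\ep'}|_{V^*}^2$ term appears on the right).

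The paper does \emph{not} integrate by parts: it keeps $(u_\ep-u_{\ep'},\ep(-\Delta+1)u_\ep)_H$ as an $H$-pairing, applies Schwarz to get $|u_\ep-u_{\ep'}|_H\cdot\ep|(-\Delta+1)u_\ep|_H$, and then invokes the $W$-type estimate
\[
\ep\int_0^T|(-\Delta+1)u_\ep(s)|_H^2\,ds\le M,
\]
which is part of \eqref{epes0} in Lemma~\ref{solPep} and is missing from your list of uniform bounds. A Young split $2\sqrt{M}\cdot\ep|(-\Delta+1)u_\ep|_H\le M\ep^{1/2}+\ep^{3/2}|(-\Delta+1)u_\ep|_H^2$ then gives the $\ep^{1/2}$ rate after integration in time. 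Two smaller corrections: Lemma~\ref{solPep} only bounds $\beta(u_\ep)$ in $L^2(0,T;H)$, not $L^2(0,T;V)$ (the $V$-estimate \eqref{es3} comes a posteriori from $\beta(u)=\mu+f$); and for the identification $\xi=\beta(u)$ the paper does not extract pointwise-a.e.\ convergence but rather rewrites $\int_0^T(\beta(u_\ep),u_\ep)_H$ via \eqref{de8} as (up to vanishing terms) $\int_0^T\langle u_\ep,\mu_\ep+f\rangle_{V^*,V}$ and passes to the limit by pairing the strong $C([0,T];V^*)$-convergence of $u_\ep$ against the weak $L^2(0,T;V)$-convergence of $\mu_\ep+f$.
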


\smallskip

%
%
%
\begin{remark}
The operator $-\Delta + 1$ in \ref{P} and \ref{Pep} corresponds to 
the Riesz isomorphism from $V$ onto $V^*$. 
In the case of bounded domains, 
``$+1$'' of the operator $-\Delta + 1$ can be removed by virtue of 
the Poincar\'e--Wirtinger inequality (see e.g., 
\cite{CF-2015, Kenmochi-1990, KL-2005}).  
However, since the domain $\Omega$ is unbounded and the function $\beta$ 
is nonlinear in this paper, 
it would be difficult to remove ``$+1$'' of the operator $-\Delta + 1$ 
in \ref{P} and \ref{Pep}, which is an open question; note that 
the methods of \cite{CF-2015, Kenmochi-1990, KL-2005} cannot be 
applied in this paper because $|\Omega|$ appears in these methods, 
for example, the projection  
$$
Pz = z - \frac{1}{|\Omega|}\int_{\Omega} z(x)\,dx, \quad z \in H
$$ 
was effectively used.
\end{remark}

%
%
%
The strategy of the proof of Theorem \ref{maintheorem1} is as follows. 
The advantage of our approach from the Cahn--Hilliard system 
as in \cite{CF-2016, F-2017} 
is to obtain estimates, independent of $\ep>0$, for solutions to \ref{Pep} 
without any growth condition for $\beta$ (Lemma \ref{solPep}).  
The main part of this paper is to confirm Cauchy's criterion 
for solutions of \ref{Pep} (Lemma \ref{Cauchy}) 
and to obtain existence and estimates for \ref{P} 
without the growth condition for $\beta$ 
by passing to the limit in the approximate problem \ref{Pep} 
as $\ep \searrow 0$. 

%
%
%

The plan of this paper is as follows.  
Section \ref{Sec2} presents 
the porous media equation, the fast diffusion equation and the Stefan problem as examples. 
Section \ref{Sec3} provides the result for \ref{Pep}.  
In Section \ref{Sec4} we verify Cauchy's criterion 
of solutions to \ref{Pep} and prove Theorem \ref{maintheorem1}.

 \section{Examples}\label{Sec2}

\begin{ex1}
In \ref{P} and \ref{Pep} we consider 
$$
\beta (r) = |r|^{q-1}r \quad (q > 0), \qquad 
\pi_{\ep}(r) = -\frac{\ep}{2} r.           
$$         
In the case that $q>1$, the above function $\beta$ 
appears in the porous media equation 
 (see e.g., \cite{ASS-2016, M-2010, V-2007, Y-2008}). 
In the case that $0<q<1$, 
$\beta$ is the function in the fast diffusion equation 
 (see e.g., \cite{B-1983, RV-2002, V-2007}). 
Also, 
$\pi_{\ep}$ is the function 
appearing in the Cahn--Hilliard equations. 
In both examples,  
$\beta$ and $\pi_{\ep}$ satisfy (C1), (C3) and for $\ep>0$ 
there exists $u_{0\ep}$ 
satisfying (C4) and the assumption of Theorem \ref{maintheorem1} 
(see \cite[Section 6]{KY1}). 
\end{ex1}

\begin{ex2}
The Stefan problem mathematically describes 
the solid-liquid phase transition. 
The problem is described by \ref{P} with 
$$
\beta(r) = 
\begin{cases}
k_s r &\mbox{if}\ r<0, \\
0     &\mbox{if}\ 0 \leq r \leq L, \\
k_{\ell}(r-L) &\mbox{if}\ r > L,
\end{cases} 
\qquad 
\pi_{\ep}(r) = -\frac{\ep}{2} r 
$$
for all $r \in \mathbb{R}$, where $k_s, k_{\ell} > 0$ stand for the heat conductivities 
on the solid and liquid regions, respectively; 
$L > 0$ is the latent heat coefficient. In this model, $u$ and $\beta(u)$ 
represent the enthalpy and the temperature, respectively 
(see e.g., \cite{BA-2005, D-1977, Fri-1968, FKP-2004, F-2015, HK-1991}). 
In this case 
we can confirm that $\beta$ and $\pi_{\ep}$ satisfy (C1) and (C4) as follows. 

It follows from a direct computation that  
$$
\beta(r) = \hat{\beta}'(r) = \partial \hat{\beta}(r), 
$$
where 
$$
\hat{\beta}(r) := 
\begin{cases}
\frac{k_s}{2} r^2 &\mbox{if}\ r<0, \\
0     &\mbox{if}\ 0 \leq r \leq L, \\
\frac{k_{\ell}}{2}(r-L)^2 &\mbox{if}\ r > L.
\end{cases}
$$
Let $z \in V=H^1(\Omega)$ and let $K \subset \Omega$ be compact. 
Then we have 
\begin{align*}
\int_K \beta(z) 
= k_s\int_{K \cap [z < 0]} z 
   + k_{\ell}\int_{K \cap [z > L]} (z-L)
\leq (k_s + k_{\ell})|K|^{1/2}|z|_{L^2(\Omega)} 
<\infty. 
\end{align*}
Thus $\beta(z) \in L^1_{\mathrm{loc}}(\Omega)$. 
Also, letting $z \in V$ and $\psi \in C^{\infty}_{\mathrm c}(\Omega)$,  
we derive that  
\begin{align*}
\int_{\Omega} \hat{\beta}(z+\psi) 
&= \frac{k_s}{2}\int_{[z+\psi < 0]} (z+\psi)^2 
   + \frac{k_{\ell}}{2}\int_{[z+\psi > L]} (z+\psi-L)^2 \\
&\leq \frac{k_s}{2}\int_{\Omega} (z+\psi)^2 
   + \frac{k_{\ell}}{2}\int_{[z+\psi > L]} (z+\psi)^2 \\
&\leq (k_s + k_{\ell})(|z|_{L^2(\Omega)}^2 
                                     + |\psi|_{L^2(\Omega)}^2) \\
&< \infty, 
\end{align*}
which implies $\hat{\beta}(z + \psi) \in L^1(\Omega)$. 
Hence (C1) holds. 

To verify (C4) we let $u_0 \in H=L^2(\Omega)$ with 
$\hat{\beta}(u_0) \in L^1(\Omega)$ and put 
 \begin{align*}
 &A:=-\Delta+I 
 : D(A):=W \subset H \to H, 
 \\[1mm]
 &J_{\ep}:=(I+\ep A)^{-1}, \quad \ep>0.
 \end{align*}
 Then there exists $u_{0\ep} \in H^2(\Omega)$ such that 
    \begin{equation*}
       \begin{cases}
         u_{0\ep} + \ep(-\Delta + 1)u_{0\ep} = u_{0} 
         \quad \mbox{in}\ \Omega,
         \\[2mm]
         \partial_{\nu}u_{0\ep} = 0 
         \quad \mbox{on}\ \partial\Omega, 
       \end{cases}
    \end{equation*}
 that is, 
 $$u_{0\ep} = J_{\ep}u_{0}.$$ 
 The properties of $J_{\ep}$ yield that  
  \begin{align*}
  &u_{0\ep} = J_{\ep}u_{0} \to u_{0} 
  \quad \mbox{in}\ H\ \mbox{as}\ \ep \searrow 0, 
  \\[1mm]
  &|u_{0\ep}|_{H} = |J_{\ep}u_{0}|_{H} 
  \leq |u_{0}|_{H}, 
  \end{align*}
  and hence 
  \begin{align}
  &\int_{\Omega}\hat{\beta}(u_{0\ep}) \notag
  \leq \frac{1}{2}\max\{k_s, k_{\ell}\}|u_{0\ep}|_{H}^{2} 
  \leq \frac{1}{2}\max\{k_s, k_{\ell}\}|u_0|_{H}^{2}, 
  \\[2mm]
  &\ep|u_{0\ep}|_{V}^2     \label{6.1}
  = \bigl(\ep(-\Delta+I)u_{0\ep}, u_{0\ep}\bigr)_{H} 
  = (u_{0}-u_{0\ep}, u_{0\ep})_{H} 
  \leq |u_{0}|_{H}^2. 
  \end{align}
 Thus there exists $u_{0\ep}$ satisfying (C4). 
 Moreover, we observe that 
 $$
 |u_{0\ep}-u_{0}|_{V^*} \leq \ep^{1/2}|u_{0}|_{H}.
 $$ 
 Indeed, it follows from \eqref{6.1} that 
 $$
 |u_{0\ep}-u_{0}|_{V^*}^2 
 = |\ep(-\Delta+I)u_{0\ep}|_{V^*}^2 
 = \ep^2|Fu_{0\ep}|_{V^*}^2 
 = \ep^2|u_{0\ep}|_{V}^2 
 \leq \ep|u_{0}|_{H}^2.
 $$
 Finally, letting $g \in L^2\bigl(0,T; L^2(\Omega)\bigr)$, 
 we can see that (C2) is satisfied. 
 Also, we can confirm (C3) in view of the definition of $\pi_{\ep}$. 

 Therefore (C1)-(C4) hold and 
 we can apply Theorem \ref{maintheorem1} 
 for the above $\beta$ and $\pi_{\ep}$.
\end{ex2}

\section{Preliminaries}\label{Sec3}

In this section we introduce the definition of weak solutions to \ref{Pep} and show 
the result for existence of weak solutions to \ref{Pep}  
with uniform estimates in $\ep$.

%
%
%
%
 \begin{df}        
 Let $T>0$. 
 A pair $(u_{\ep}, \mu_{\ep})$ with 
    \begin{align*}
    &u_{\ep}\in H^1(0, T; V^{*})\cap L^{\infty}(0, T; V)\cap 
    L^2(0, T; W), 
    \\
    &\mu_{\ep}\in L^2(0, T; V)
    \end{align*}
 is called a {\it weak solution} of \ref{Pep} if 
 $(u_{\ep} ,\mu_{\ep})$ 
 satisfies 
    \begin{align}
        & \bigl\langle u_{\ep}'(t), z\bigr\rangle_{V^{*}, V} + 
          \bigl(\mu_{\ep}(t), z\bigr)_{V} = 0 \quad 
          \mbox{for all}\ z \in V\ 
          \mbox{and a.a}.\ t\in(0, T), \label{de7}
     \\[3mm]
        & \mu_{\ep}(t) = \ep(-\Delta + I)u_{\ep}(t) + 
        \beta(u_{\ep}(t)) + \pi_{\ep}(u_{\ep}(t)) - f(t) 
        \quad \mbox{in}\ V
        \quad \mbox{for a.a.}\ t\in(0, T), \label{de8}
     \\[3mm]
        & u_{\ep}(0) = u_{0\ep} 
        \quad \mbox{a.e.\ on}\ \Omega. \label{de9}
     \end{align}
 \end{df}

%
%
 \begin{lem}\label{solPep}
 Let $T>0$. 
 Assume {\rm (C1)-(C4)}. Then for every $\ep\in(0, 1]$                         
 there exists a unique weak solution $(u_{\ep}, \mu_{\ep})$ of 
 {\rm \ref{Pep}}, satisfying
     \begin{equation*}
        u_{\ep} \in H^1(0, T; V^{*})
        \cap L^{\infty}(0, T; V)\cap L^2(0, T; W), 
        \quad \mu_{\ep} \in L^2(0, T; V) 
     \end{equation*}
 and there exist  constants $M>0$ and 
 $\overline{\ep}\in(0, 1]$ 
 such that 
     \begin{align}
     &|u_{\ep}(t)|_{H}^2 
        + \ep\int_{0}^{t}|(-\Delta+I)u_{\ep}(s)|_{H}^2\,ds 
     \leq M, \label{epes0} 
     \\
     &\int_{0}^{t}\bigl|u_{\ep}'(s)\bigr|_{V^{*}}^2\,ds + 
     \ep|u_{\ep}(t)|_{V}^2 
     \leq M, \label{epes1}
     \\
     &\int_{0}^{t}|\mu_{\ep}(s)|_{V}^2\,ds \leq M, 
     \label{epes2}
     \\
     &\int_{0}^{t}|\beta(u_{\ep}(s))|_{H}^2\,ds 
     \leq M \label{epes3}
     \end{align}
 for all $t\in[0, T]$ and all $\ep\in(0, \overline{\ep}]$.
 \end{lem}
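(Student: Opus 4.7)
\emph{Existence and uniqueness for fixed $\ep$.} The plan is to recast \ref{Pep} as the abstract Cauchy problem $u_\ep' + F\mu_\ep = 0$ in $V^*$ with $\mu_\ep = \partial\Psi_\ep(u_\ep) - f$, where
\[
\Psi_\ep(z) := \tfrac{\ep}{2}\|z\|_V^2 + \int_\Omega \hat{\beta}(z) + \int_\Omega \hat{\pi}_\ep(z).
\]
This $\Psi_\ep$ is proper, convex, and lower semicontinuous on $V$; convexity of the integrand $\tfrac{\ep}{2}r^2+\hat{\pi}_\ep(r)$ postulated in (C3) is essential, because $\hat{\pi}_\ep$ alone is not convex. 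Following the strategy of \cite{KY1} (in particular the subdifferential identification from their Lemma 4.2), condition (C1b) together with this pointwise convexity lets us identify $F\bigl(\partial\Psi_\ep(\cdot)-f\bigr)$ with the subdifferential of a proper l.s.c.\ convex functional on $V^*$, even though $\Omega$ is unbounded. Br\'ezis's theorem on evolution equations governed by subdifferentials then yields a unique $u_\ep$; defining $\mu_\ep$ through \eqref{de8} and invoking Neumann elliptic regularity gives the stated spatial regularity. Uniqueness comes from subtracting two solutions, testing the difference with $u_\ep^{(1)} - u_\ep^{(2)}$ in $V$, and combining monotonicity of $\beta$ with the Lipschitz property of $\pi_\ep$ via Gr\"onwall.

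\emph{Master energy identity, and \eqref{epes1}--\eqref{epes2}.} Testing \eqref{de7} with $z=\mu_\ep(t)$ gives $\langle u_\ep',\mu_\ep\rangle + \|\mu_\ep\|_V^2 = 0$. Applying the chain rule for subdifferentials to $\Psi_\ep$ and combining with the identity $\langle u_\ep', f\rangle = -(\mu_\ep,f)_V$ (obtained by testing \eqref{de7} with $z=f(t)$, which is allowed since $f \in L^2(0,T;V)$) and Young's inequality produces
\[
\frac{d}{dt}\Psi_\ep(u_\ep) + \tfrac12\|\mu_\ep\|_V^2 \leq \tfrac12\|f\|_V^2.
\]
Since $\tfrac{\ep}{2}r^2+\hat{\pi}_\ep(r)\geq 0$ by convexity and vanishing at $0$, one has $\Psi_\ep(z)\geq \tfrac{\ep}{2}\|\nabla z\|_{L^2}^2$; integrating in time and using (C2) and (C4) to bound $\Psi_\ep(u_{0\ep})$ and $\int_0^T\|f\|_V^2\,ds$ uniformly in $\ep$ gives \eqref{epes2} and the pointwise bound $\ep\|\nabla u_\ep(t)\|^2\leq M$. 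Since $u_\ep'=-F\mu_\ep$ implies $\|u_\ep'\|_{V^*}=\|\mu_\ep\|_V$, the first integral in \eqref{epes1} is immediate.

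\emph{Pointwise $H$-bound, and \eqref{epes0}, \eqref{epes3}.} Testing \eqref{de7} with $z=u_\ep(t)\in V$ yields $\tfrac12\tfrac{d}{dt}\|u_\ep\|_H^2+(\mu_\ep,u_\ep)_V=0$, which I expand via \eqref{de8}. Four observations close up the estimate: (i) $\ep((-\Delta+I)u_\ep,u_\ep)_V\geq 0$ by Green's formula using $\partial_\nu u_\ep = 0$; (ii) $(\beta(u_\ep),u_\ep)_V = \int_\Omega \beta'(u_\ep)|\nabla u_\ep|^2 + \int_\Omega u_\ep\beta(u_\ep)\geq 0$ by monotonicity of $\beta$; (iii) $|(\pi_\ep(u_\ep),u_\ep)_V|\leq c_1\sigma(\ep)\|u_\ep\|_V^2 < \ep\|u_\ep\|_H^2 + \ep\|\nabla u_\ep\|^2$ from (C3), where the $\nabla$ piece is already uniformly controlled from Stage~2 and the $H$ piece is handled by Gr\"onwall; and crucially (iv) $(f,u_\ep)_V=(g,u_\ep)_H$ from the definition of $f$ in (C2), which is what keeps any growth of $\beta$ out of the right-hand side. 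Gr\"onwall's inequality then gives $\|u_\ep(t)\|_H^2\leq M$, completing \eqref{epes0} and \eqref{epes1}. For the $\ep\int_0^t\|(-\Delta+I)u_\ep\|_H^2$ term, take the $H$-inner product of \eqref{de8} with $(-\Delta+I)u_\ep$: Neumann IBP makes the LHS equal $(\mu_\ep,u_\ep)_V = -\tfrac12\tfrac{d}{dt}\|u_\ep\|_H^2$, the $\beta$-contribution nonnegative, and the $\pi_\ep$/$f$-contributions lower order. Finally, \eqref{epes3} follows by taking the $H$-inner product of \eqref{de8} with $\beta(u_\ep)$: IBP plus $\beta'\geq 0$ gives $\ep((-\Delta+I)u_\ep,\beta(u_\ep))_H\geq 0$, and Young's inequality absorbs a fraction of $\|\beta(u_\ep)\|_H^2$ from the three remaining terms into the LHS, leaving only already-controlled quantities.

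\emph{Expected main obstacle.} The most delicate step is Stage~1: on an unbounded domain the Poincar\'e--Wirtinger projections used in \cite{CF-2015, Kenmochi-1990, KL-2005} are unavailable (they require $|\Omega|<\infty$), so the subdifferential representation of $\mu_\ep+f$ must instead be obtained through (C1b) and the pointwise convexity of (C3), following \cite{KY1}. Once that abstract setup is in place, the energy estimates of Stages~2 and 3 close cleanly precisely because the Lipschitz constant of $\pi_\ep$ is strictly smaller than $\ep$ (so $\overline{\ep}\in(0,1]$ can be chosen with $c_1\sigma(\ep)<\ep$ throughout, absorbing the $\pi_\ep$-errors into the $\ep$-regularizing terms) and because the transfer $(f,\cdot)_V=(g,\cdot)_H$ provided by (C2) allows $f$ to be handled in $H$ rather than $V$, bypassing any need for a growth condition on $\beta$.
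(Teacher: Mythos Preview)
Your proposal is correct and follows essentially the same route as the paper: define the convex functional $\phi_\ep(z)=\tfrac{\ep}{2}|z|_V^2+\int_\Omega\hat\beta(z)+\int_\Omega\hat\pi_\ep(z)$ on $V^*$, invoke \cite[Lemmas~4.1--4.2]{KY1} (which use (C1b) and the convexity in (C3) in place of the Poincar\'e--Wirtinger machinery) to recognize \ref{Pep} as $u_\ep'+\partial\phi_\ep(u_\ep)=g$ in $V^*$, and then derive the uniform bounds by the energy tests from \cite[Section~4]{KY1}. The paper's own proof simply cites that reference, whereas you spell out the test-function calculations; your term-by-term expansion of $(\mu_\ep,u_\ep)_V$ in Stage~3 is formal (it tacitly assumes $\beta(u_\ep)\in V$ and $\beta'$ exists), but this is exactly what the Yosida regularization in \cite{KY1} makes rigorous, so no substantive gap remains.
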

\begin{proof}
We can prove existence and estimates for \ref{Pep} 
by setting the proper lower semicontinuous convex function 
$\phi_{\ep} : V^{*}\to\overline{\mathbb{R}}$ as 
$$
 \phi_{\ep}(z):=
   \begin{cases}
   \displaystyle\frac{\ep}{2}\int_{\Omega}
                   \bigl(|z(x)|^2 + |\nabla z(x)|^2\bigr)\,dx + 
   \int_{\Omega}\hat{\beta}(z(x))\,dx + 
   \int_{\Omega}\hat{\pi}_{\ep} (z(x))\,dx &
   \\
   \hspace{50mm}\mbox{if}\ 
   z\in D(\phi_{\ep}):=\{z\in V\ |\ \hat{\beta}(z)\in L^1(\Omega) \},\ &
   \\[3mm]
   +\infty 
   \hspace{42.7mm}\mbox{otherwise}  &
   \end{cases}
 $$
and by applying the monotonicity method for 
\begin{equation*}
       \begin{cases}
         u_{\ep}'(t) + \partial\phi_{\ep}(u_{\ep}(t)) = g(t) 
         \quad \mbox{in} \ V^{*}
         \quad \mbox{for a.a.}\ t\in[0, T],
         \\[2mm]
         u_{\ep}(0) = u_{0\ep} \quad \mbox{in}\ V^{*}. 
       \end{cases}
    \end{equation*}
Indeed, 
\cite[Lemma 4.1]{KY1} assures that $\phi_{\ep}$ is 
proper lower semicontinuous convex on $V^*$;  
note that
in \cite[Lemma 4.1]{KY1} 
the growth condition for $\beta$ does not need to obtain the inequality 
\begin{align*}
 \lambda 
 \geq \phi_{\ep}(z_{n}) 
 = \frac{\ep}{2}|z_{n}|_{V}^2 + \int_{\Omega}\hat{\beta}(z_{n}) 
    + \int_{\Omega}\hat{\pi}_{\ep} (z_{n}) 
 \geq \frac{\ep}{2}|z_{n}|_{V}^2 + \int_{\Omega}\hat{\pi}_{\ep} (z_{n}) 
 \end{align*} 
because of the nonnegativity of $\hat{\beta}$. 
Thus
we can prove this lemma in the same way as in \cite[Section 4]{KY1} 
(without growth conditions for $\beta$).
\end{proof}

\vspace{10pt}

\section{Proof of Theorem \ref{maintheorem1}}\label{Sec4}

This section gives the proof of 
Theorem \ref{maintheorem1} 
by confirming that the solution of \ref{Pep} converges to 
a function as $\ep \searrow 0$, which constructs the solution of \ref{P}. 
The key is to show 
the following lemma which asserts Cauchy's criterion for solutions of \ref{Pep}. 
 \begin{lem}\label{Cauchy}                                                                   
 Let $\overline{\ep}$, $(u_{\ep}, \mu_{\ep})$ and $M$ be as in Lemma \ref{solPep}. 
 Then we have 
 \begin{align}\label{Cauchyineq}
   &|u_{\ep}-u_{\gamma}|^2_{C([0, T]; V^{*})} + 
        2\int_{0}^{T} 
        \bigl(\beta(u_{\ep}(s)) - \beta(u_{\gamma}(s)), 
                              u_{\ep}(s) - u_{\gamma}(s)\bigr)_{H}
        \,ds \\ \notag
   &\leq |u_{0\ep} - u_{0\gamma}|_{V^*}^2 
            + 2M(\ep^{1/2} + \gamma^{1/2}) 
            + 2MT(\ep^{1/2} + \gamma^{1/2} + 2c_1(\sigma(\ep) + \sigma(\gamma)))
 \end{align}
 for all $\ep, \gamma \in (0, \overline{\ep}]$.
 \end{lem}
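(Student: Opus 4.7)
The plan is to subtract the two weak formulations of \ref{Pep} corresponding to parameters $\ep$ and $\gamma$ and to test the resulting identity with $F^{-1}(u_\ep - u_\gamma) \in V$. This choice is natural because it produces $\tfrac{1}{2}\tfrac{d}{dt}|u_\ep - u_\gamma|_{V^*}^2$ from the time derivative term, while, by the duality relations \eqref{defF}--\eqref{innerVstar} combined with the embedding $H \hookrightarrow V^*$, it converts the remaining contribution $(\mu_\ep - \mu_\gamma, F^{-1}(u_\ep - u_\gamma))_V$ into the $H$-pairing $(\mu_\ep - \mu_\gamma, u_\ep - u_\gamma)_H$, where both factors are genuine $L^2(\Omega)$-functions.

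The next step is to substitute $\mu_\ep - \mu_\gamma$ via \eqref{de8}. The $f$-terms cancel, the monotone piece $(\beta(u_\ep) - \beta(u_\gamma), u_\ep - u_\gamma)_H \geq 0$ is moved to the left-hand side, and one integrates in time over $[0,t]$. What remains on the right is the initial datum $\tfrac{1}{2}|u_{0\ep} - u_{0\gamma}|_{V^*}^2$ together with three error integrals $E_1$, $E_2$, $E_3$ arising, respectively, from the $\ep(-\Delta+I)u_\ep$, $\gamma(-\Delta+I)u_\gamma$, and $\pi_\ep(u_\ep) - \pi_\gamma(u_\gamma)$ contributions.

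Third, I would bound each error integral using the uniform estimates of Lemma \ref{solPep}. For $E_1$ and $E_2$, estimate \eqref{epes0} gives $\int_0^T \ep |(-\Delta+I)u_\ep|_H^2\,ds \leq M$, and \eqref{epes0} also yields $|u_\ep - u_\gamma|_{L^\infty(0,T;H)} \leq 2M^{1/2}$; a Cauchy--Schwarz step on $\int_0^t \sqrt{\ep}\cdot\sqrt{\ep}|(-\Delta+I)u_\ep|_H\,ds$ then produces $|E_1| \leq 2M\sqrt{T\ep}$ and $|E_2| \leq 2M\sqrt{T\gamma}$. Invoking the elementary inequality $2\sqrt{T}\leq 1+T$ splits these cleanly into the two groups $2M(\ep^{1/2}+\gamma^{1/2})$ and $2MT(\ep^{1/2}+\gamma^{1/2})$ appearing in \eqref{Cauchyineq}. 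For $E_3$, combining $\pi_\ep(0)=0$, the Lipschitz bound $|\pi_\ep'|_{L^\infty(\mathbb{R})} \leq c_1\sigma(\ep)$ from (C3), and $|u_\ep|_H, |u_\gamma|_H \leq M^{1/2}$ from \eqref{epes0} gives $|E_3| \leq 2c_1 MT(\sigma(\ep)+\sigma(\gamma))$, which after doubling matches the last piece of \eqref{Cauchyineq}. Taking the supremum over $t\in[0,T]$ on the left-hand side then yields the $C([0,T];V^*)$-norm.

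The step requiring most care is the first one: verifying that $F^{-1}(u_\ep - u_\gamma)$ is an admissible test function (it is, since $u_\ep - u_\gamma \in L^\infty(0,T;V) \subset L^\infty(0,T;V^*)$ and hence $F^{-1}(u_\ep - u_\gamma) \in L^\infty(0,T;V)$) and justifying the Gelfand-triple chain-rule identity $\langle (u_\ep - u_\gamma)'(t), F^{-1}(u_\ep - u_\gamma)(t)\rangle_{V^*,V} = \tfrac{1}{2}\tfrac{d}{dt}|u_\ep - u_\gamma|_{V^*}^2$, which rests on $u_\ep - u_\gamma \in H^1(0,T;V^*)$ as supplied by \eqref{epes1}. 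Once this identification is in place, the rest of the argument is essentially bookkeeping against the uniform bounds furnished by Lemma \ref{solPep}.
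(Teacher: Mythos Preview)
Your proposal is correct and follows essentially the same route as the paper's proof: both test the difference of the weak formulations against $F^{-1}(u_\ep - u_\gamma)$, substitute \eqref{de8}, move the monotone $\beta$-term to the left, and bound the remaining error contributions via the uniform estimates \eqref{epes0}. The only (inconsequential) difference is in the arithmetic for the $\ep(-\Delta+I)u_\ep$ terms: the paper applies a pointwise Young splitting $2\sqrt{M}\cdot\ep|(-\Delta+I)u_\ep|_H \leq M\ep^{1/2} + \ep^{3/2}|(-\Delta+I)u_\ep|_H^2$ before integrating in time, whereas you use Cauchy--Schwarz in time followed by $2\sqrt{T}\leq 1+T$.
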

\begin{proof}
We have from \eqref{defF}, \eqref{innerVstar} and \eqref{de7} that 
\begin{align}\label{Ca1}
\frac{1}{2}\frac{d}{ds}|u_{\ep}(s) - u_{\gamma}(s)|_{V^*}^2 
&= \langle u_{\ep}'(s) - u_{\gamma}'(s), F^{-1}(u_{\ep}(s) - u_{\gamma}(s)) 
                                                                                        \rangle_{V^*, V}
\\ \notag
&= -(F^{-1}(u_{\ep}(s) - u_{\gamma}(s)), \mu_{\ep}(s) - \mu_{\gamma}(s))_{V} 
\\ \notag
&= -\langle u_{\ep}(s) - u_{\gamma}(s), \mu_{\ep}(s) - \mu_{\gamma}(s) 
                                                                                        \rangle_{V^*, V} 
\\ \notag
&= -(u_{\ep}(s) - u_{\gamma}(s), \mu_{\ep}(s) - \mu_{\gamma}(s))_{H}. 
\end{align}
Here \eqref{de8} yields that 
\begin{align}\label{Ca2'}
&-(u_{\ep}(s) - u_{\gamma}(s), \mu_{\ep}(s) - \mu_{\gamma}(s))_{H} 
\\ \notag
&= (u_{\ep}(s) - u_{\gamma}(s), -\ep(-\Delta + 1)u_{\ep}(s) 
                                                      + \gamma(-\Delta + 1)u_{\gamma}(s))_{H}
\\ \notag
&\quad\,-(\beta(u_{\ep}(s)) - \beta(u_{\gamma}(s)), 
                                                        u_{\ep}(s) - u_{\gamma}(s))_{H} 
\\ \notag
&\quad\,+(u_{\ep}(s) - u_{\gamma}(s), -\pi_{\ep}(u_{\ep}(s)) 
                                                            + \pi_{\gamma}(u_{\gamma}(s)))_{H}. 
\end{align}
Combination of \eqref{Ca1} and \eqref{Ca2'} 
together with the Schwarz inequality 
gives that 
\begin{align}\label{Ca2}
&\frac{1}{2}\frac{d}{ds}|u_{\ep}(s) - u_{\gamma}(s)|_{V^*}^2 
+(\beta(u_{\ep}(s)) - \beta(u_{\gamma}(s)), 
                                                        u_{\ep}(s) - u_{\gamma}(s))_{H}
\\ \notag
&= (u_{\ep}(s) - u_{\gamma}(s), -\ep(-\Delta + 1)u_{\ep}(s) 
                                                      + \gamma(-\Delta + 1)u_{\gamma}(s))_{H} 
\\ \notag
&\quad\,
  +(u_{\ep}(s) - u_{\gamma}(s), -\pi_{\ep}(u_{\ep}(s)) 
                                                            + \pi_{\gamma}(u_{\gamma}(s)))_{H}
\\ \notag
&\leq (|u_{\ep}(s)|_{H} + |u_{\gamma}(s)|_{H})
(\ep|(-\Delta+1)u_{\ep}(s)|_{H} + \gamma|(-\Delta+1)u_{\gamma}(s)|_{H}) 
\\ \notag
&\quad\,+ (|u_{\ep}(s)|_{H} + |u_{\gamma}(s)|_{H})
                     (|\pi_{\ep}(u_{\ep}(s))|_{H} + |\pi_{\ep}(u_{\ep}(s))|_{H}). 
\end{align}
Moreover, it follows from \eqref{epes0} and (C3) that 
\begin{align}\label{Ca3}
&|u_{\ep}(s)|_{H} \leq \sqrt{M}, \\ \label{Ca31}
&|\pi_{\ep}(u_{\ep}(s))|_{H} \leq c_1\sigma(\ep)|u_{\ep}(s)|_{H} 
\leq c_1\sigma(\ep)\sqrt{M} 
\end{align}
for all $s \in [0, T]$ and all $\ep \in (0, \overline{\ep}]$. 
Thus we have from \eqref{Ca2}, \eqref{Ca3} and \eqref{Ca31} that 
\begin{align*}
&\frac{1}{2}\frac{d}{ds}|u_{\ep}(s) - u_{\gamma}(s)|_{V^*}^2 
+(\beta(u_{\ep}(s)) - \beta(u_{\gamma}(s)), 
                                                        u_{\ep}(s) - u_{\gamma}(s))_{H}
\\
&\leq 2\sqrt{M}
         (\ep|(-\Delta+1)u_{\ep}(s)|_{H} + \gamma|(-\Delta+1)u_{\gamma}(s)|_{H} 
                                   + c_1\sqrt{M}\sigma(\ep) + c_1\sqrt{M}\sigma(\gamma)) 
\\ 
&\leq M(\ep^{1/2} + \gamma^{1/2}) +\ep^{3/2}|(-\Delta+1)u_{\ep}(s)|_{H}^2 
         + \gamma^{3/2}|(-\Delta+1)u_{\gamma}(s)|_{H}^2 
\\
         &\quad\,+ 2c_1M(\sigma(\ep) + \sigma(\gamma)).
\end{align*}
Hence, integrating this inequality, we conclude from \eqref{epes0} that 
\eqref{Cauchyineq} holds.
\end{proof}

We are now in a position to complete the proof of Theorem \ref{maintheorem1}. 

\begin{prth1.1a}
Lemma \ref{Cauchy}, the monotonicity of $\beta$, (C3), (C4) imply that 
$\{u_{\ep}\}_{\ep \in (0, \overline{\ep}]}$ 
satisfies Cauchy's criterion in $C([0, T]; V^*)$, 
and hence there exists a function $u \in C([0, T]; V^*)$ such that 
\begin{align}\label{SK}
u_{\ep} \to u \quad \mbox{in}\ C([0, T]; V^*)
\end{align}
as $\ep \searrow 0$.
We have from \eqref{SK} and (C4) that 
$$
u(0) = u_0 \quad \mbox{in}\ V^*
$$
and, since $u_0 \in H$, it holds that 
\begin{align}\label{shokiti}
u(0) = u_0 \quad \mbox{a.e.\ on}\ \Omega.
\end{align}
The estimates \eqref{epes0}--\eqref{epes3} yield that 
there exist a subsequence $\{\ep_{k}\}_{k\in\mathbb{N}}$, 
with $\ep_{k} \searrow 0$ as $k \to \infty$, and 
some functions $v \in H^1(0, T; V^*) \cap L^{\infty}(0, T; H)$, 
$\mu \in L^2(0, T; V)$ and $\xi \in L^2(0, T; H)$ satisfying  
\begin{align}
&u_{\ep_k} \to v \quad \mbox{weakly$^*$ in}\ H^1(0, T; V^*) 
\cap L^{\infty}(0, T; H), 
\label{shun0}\\
&\ep_{k}(-\Delta+I)u_{\ep_{k}} \to 0\quad \mbox{in}\ L^2(0, T; H), 
\label{shun2}\\
&\mu_{\ep_{k}} \to \mu \quad \mbox{weakly in}\ L^2(0, T; V), 
\label{shun3}\\
&\beta(u_{\ep_{k}}) \to \xi \quad \mbox{weakly in}\ L^2(0, T; H) \label{shun4} 
\end{align}
as $k \to \infty$. 
Now we will confirm that 
\begin{align}\label{u=v}
u = v \quad \mbox{a.e.\ on}\ \Omega\times (0, T).
\end{align}
Let $\psi \in C_{\mathrm c}^{\infty}(\Omega\times[0, T])$. 
Then we see from \eqref{SK} that 
\begin{align}\label{pote1}
\int_{0}^{T}\int_{\Omega}(u_{\ep_{k}} - v)\psi \to \int_{0}^{T}\int_{\Omega}(u- v)\psi
\end{align}
as $k \to \infty$. 
On the other hand, from \eqref{shun0} we have  
\begin{align}\label{pote2}
\int_{0}^{T}\int_{\Omega}(u_{\ep_{k}} - v)\psi 
= \int_{0}^{T} \langle u_{\ep_{k}}(t) - v(t), \psi(t) \rangle_{V^*, V}\,dt
\to 0
\end{align}
as $k \to \infty$. 
Thus it follows from \eqref{pote1} and \eqref{pote2} that 
$$
\int_{0}^{T}\int_{\Omega}(u - v)\psi 
= 0
$$
for all $\psi \in C_{\mathrm c}^{\infty}(\Omega\times[0, T])$. 
This implies that \eqref{u=v} holds.  
Consequently, 
we derive that 
$u \in H^1(0, T; V^*) \cap L^{\infty}(0, T; H)$ and 
\begin{align}\label{shun1}
u_{\ep_k} \to u \quad \mbox{weakly$^*$ in}\ H^1(0, T; V^*) 
\cap L^{\infty}(0, T; H) 
\end{align}
as $k \to \infty$. 

Next we show that 
\begin{equation}\label{pito0}
\pi_{\ep_{k}}(u_{\ep_{k}}) \to 0 \quad \mbox{in}\ L^{\infty}(0, T; H)
\end{equation}
as $k \to \infty$.
It follows from (C3) that 
$$
|\pi_{\ep_{k}}(u_{\ep_{k}})| \leq c_1\sigma(\ep_{k})|u_{\ep_{k}}| \quad 
\mbox{a.e.\ on}\ \Omega\times(0, T), 
$$
and \eqref{epes0} enables us to see that \eqref{pito0} holds. 

Moreover, we prove that 
\begin{equation}\label{betaxi}
\xi = \beta(u) \quad \mbox{a.e.\ on}\ \Omega\times(0, T).
\end{equation}
To this end it suffices to confirm that 
\begin{equation}\label{limsup}
\limsup_{k\to\infty}\int_0^T (\beta(u_{\ep_{k}}(t)), u_{\ep_{k}}(t))_{H}\,dt 
\leq \int_0^T (\xi(t), u(t))_{H}\,dt
\end{equation}
(see \cite[Proposition 2.2, p.\ 38]{Barbu}). 
We infer from \eqref{de8}, \eqref{shun3}, \eqref{shun1} and \eqref{pito0} that 
\begin{align}\label{limsu}
&\int_0^T (\beta(u_{\ep_{k}}(t)), u_{\ep_{k}}(t))_{H}\,dt \\ \notag
&= \int_0^T (\mu_{\ep_k}(t) + f(t), u_{\ep_k}(t))_{H}\,dt 
   -\ep_k\int_0^T |u_{\ep_k}(t)|^2_{V}\,dt \\ \notag 
  &\quad\,-\int_0^T (\pi_{\ep_k}(u_{\ep_k}(t)), u_{\ep_k}(t))_{H}\,dt \\ \notag
&\leq \int_0^T \langle u_{\ep_k}(t), \mu_{\ep_k}(t) + f(t) \rangle_{V^*, V}\,dt 
         -\int_0^T (\pi_{\ep_k}(u_{\ep_k}(t)), u_{\ep_k}(t))_{H}\,dt \\ \notag
&\to \int_0^T \langle u(t), \mu(t) + f(t) \rangle_{V^*, V}\,dt 
  = \int_0^T (u(t), \mu(t) + f(t))_{H}\,dt 
\end{align}
as $k\to\infty$. 
Here, from \eqref{de8}, \eqref{shun2}, \eqref{shun4} and \eqref{pito0} we have 
\begin{equation}\label{muxif}
\mu = \xi - f \quad \mbox{a.e.\ on}\ \Omega\times(0, T). 
\end{equation} 
Thus combination of \eqref{limsu} and \eqref{muxif} leads to \eqref{limsup}, i.e., 
\eqref{betaxi}. 

Next we confirm that there exists a constant $C_1>0$ such that 
\begin{equation}\label{estibeta}
\int_0^t |\beta(u(s))|^2_{V}\,ds \leq C_1
\end{equation}
for all $t\in[0, T]$. 
We derive from \eqref{epes2} and \eqref{shun3} that 
there exists a constant $C_2 > 0$ such that 
\begin{equation}\label{apmu}
\int_0^t |\mu(s)|^2_{V}\,ds \leq C_2
\end{equation}
for all $t \in [0, T]$.
Hence, noting that $\mu \in L^2(0, T; V)$ and $f \in L^2(0, T; V)$, 
we see from \eqref{betaxi}, \eqref{muxif} and \eqref{apmu} 
that $\beta(u) \in L^2(0, T; V)$ and 
\begin{align*}
\int_0^t |\beta(u(s))|^2_{V}\,ds 
&= \int_0^t |\mu(s) + f(s)|^2_{V}\,ds \\
&\leq 2\int_0^t |\mu(s)|^2_{V}\,ds + 2\int_0^t |f(s)|^2_{V}\,ds  \\
&\leq 2C_2 + 2\|f\|^2_{L^2(0, T; V)},
\end{align*}
which implies \eqref{estibeta}. 
Thus, from \eqref{de7}, \eqref{shokiti}, \eqref{shun3}, \eqref{shun1},  
\eqref{betaxi} and \eqref{muxif} 
we have shown that $(u, \mu)$ is a solution of \ref{P}. 
Moreover, by \eqref{epes0}, \eqref{epes1}, \eqref{shun1}, \eqref{estibeta} and 
\eqref{apmu} we obtain \eqref{es0}--\eqref{es3}. 

Finally, we check that the solution $(u, \mu)$ of the problem \ref{P} is unique. 
Assume that 
$(u_1, \mu_1)$ and $(u_2, \mu_2)$ are the solutions of \ref{P} 
with the same initial data. 
Then 
it follows from \eqref{defF}--\eqref{de5} that 
\begin{equation*}
(u_1'(t)-u_2'(t), Fz)_{V^*} 
+ \langle Fz, \beta(u_1(t)) - \beta(u_2(t)) \rangle_{V^*, V} = 0 
\end{equation*}
for all $z \in V$. 
Choosing $z = F^{-1}(u_1(t)-u_2(t)) \in V$, we derive from \eqref{de6} that 
$$
\frac{1}{2}|u_1(t)-u_2(t)|^2_{V^*} 
+ \int_0^t (\beta(u_1(s))-\beta(u_2(s)), u_1(s)-u_2(s))_{H}\,ds = 0
$$
for all $t \in [0, T]$. Then the second term on the left-hand side is nonnegative by virtue of 
the monotonicity of $\beta$, so that 
$$
|u_1(t)-u_2(t)|^2_{V^*} \le 0
$$ 
for all $t \in [0, T]$. 
Hence it holds that $u_1=u_2$. 
Furthermore, we infer from \eqref{de5} that 
$$
\mu_1(t) = \beta(u_1(t)) -f(t) = \beta(u_2(t)) -f(t) = \mu_2(t)
$$
for all $t \in [0, T]$.
\qed 
\end{prth1.1a} 
\begin{prth1.1b}
The error estimate in \eqref{eres} 
can be proved by
the same argument as in the proof of \cite[Theorem 1.3]{KY1}.  
\qed 
\end{prth1.1b}
 \vspace{10pt}


%
 {\small
}%

\begin{thebibliography}{99}
 \setlength{\itemsep}{0.54mm}
 \bibitem{ASS-2016}
    G. Akagi, G. Schimperna, A. Segatti, 
    {\it Fractional Cahn--Hilliard, 
    Allen--Cahn and porous medium equations}, 
    J. Differential Equations {\bf 261} (2016), 
    2935--2985. 
 \bibitem{Barbu2}
    V. Barbu, 
    ``Nonlinear semigroups and 
    differential equations in Banach spaces'', 
    Translated from the Romanian, 
    Editura Academiei Republicii Socialiste Rom\^ania, 
    Bucharest; Noordhoff International Publishing, 
    Leiden, 1976.
 \bibitem{Barbu}
    V. Barbu, 
    ``Nonlinear Differential Equations 
    of Monotone Types in Banach Spaces'', 
    Springer, London, 2010. 
 \bibitem{BA-2005}
    D. Blanchard, A. Porretta, 
    {\it Stefan problems with 
    nonlinear diffusion and convection}, 
    J. Differential Equations {\bf 210} (2005), 383--428. 
 \bibitem{Brezis}
    H. Br\'ezis, ``Op\'erateurs Maximaux Monotones et 
    Semi-groupes de Contractions 
    dans les Especes de Hilbert'', 
    North-Holland, Amsterdam, 1973. 
\bibitem{CF-2015}
     P. Colli, T. Fukao, 
    {\it Equation and dynamic boundary condition of 
Cahn--Hilliard type with singular potentials}, 
Nonlinear Anal.\ {\bf 127} (2015), 413--433.
\bibitem{CF-2016}
    P. Colli, T. Fukao,
    {\it Nonlinear diffusion equations as 
    asymptotic limits of Cahn--Hilliard systems}, 
    J. Differential Equations {\bf 260} (2016), 6930--6959. 
 \bibitem{CV-1990}
     P. Colli, A. Visintin, 
     {\it On a class of doubly nonlinear evolution equations}, 
     Comm.\ Partial Differential Equations {\bf 15} (1990), 
     737--756. 
 \bibitem{D-1977}
    A. Damlamian, 
    {\it Some results on the multi-phase Stefan problem}, 
    Comm. Partial Differential Equations {\bf 2} (1977), 
    1017--1044. 
 \bibitem{B-1983}
    E. DiBenedetto, 
    {\it Continuity of weak solutions to 
    a general porous medium equation}, 
    Indiana Univ. Math. J. {\bf 32} (1983), 83--118. 
 \bibitem{DET-2016}
 J. Droniou, R. Eymard, K. S. Talbot, 
{\it Convergence in $C([0, T]; L^2(\Omega))$    
of weak solutions to perturbed doubly degenerate parabolic equations}, 
J. Differential Equations {\bf 260} (2016), 7821--7860.
 \bibitem{EZ-1986}
    C. M. Elliott, S. Zheng, 
    {\it On the Cahn--Hilliard equation}, 
    Arch. Ration. Mech. Anal. {\bf 96} (1986), 339--357. 
 \bibitem{Fri-1968}
    A. Friedman, 
    {\it The Stefan problem in several space variables}, 
    Trans. Amer. Math. Soc. {\bf 133} (1968), 51--87. 
 \bibitem{FKP-2004}
    T. Fukao, N. Kenmochi, I. Paw\l ow, 
    {\it Transmission problems arising 
    in Czochralski process of crystal growth}, 
    Mathematical aspects of modelling structure 
    formation phenomena (B\c{e}dlewo/Warsaw, 2000), 228--243, 
    GAKUTO Internat.\ Ser.\ Math.\ Sci.\ Appl., {\bf 17}, 
    Gakk$\bar{\mbox{o}}$tosho, Tokyo, 2001.
 \bibitem{F-2015}
    T. Fukao, 
    {\it Convergence of Cahn--Hilliard systems to 
    the Stefan problem with dynamic boundary conditions}, 
    Asymptot. Anal. {\bf 99} (2016), 1--21. 
 \bibitem{F-2017}
 T. Fukao, 
``Cahn--Hilliard Approach to Some Degenerate Parabolic Equations 
with Dynamic Boundary Conditions'', 
System Modeling and Optimization (Sophia Antipolis, 2015), 282--291, 
IFIP AICT {\bf 494}, Springer, 2017.
 \bibitem{HK-1991}
    A. Haraux, N. Kenmochi, 
    {\it Asymptotic behaviour of solutions to 
    some degenerate parabolic equations}, 
    Funkcial. Ekvac. {\bf 34} (1991), 19--38. 
 \bibitem{Deg}
 C. E. Kenig,
``Degenerate Diffusions'', 
Initial value problems and local regularity theory. EMS Tracts in Mathematics, 1. European Mathematical Society (EMS), Zürich, 2007.
 \bibitem{Kenmochi-1990}
 N. Kenmochi, 
{\it Neumann problems for a class of nonlinear degenerate parabolic equations},  
Differential Integral Equations {\bf 3} (1990), 253--273.
\bibitem{KL-2005}
M. Kubo, Q. Lu, 
{\it Nonlinear degenerate parabolic equations with Neumann boundary condition},  
J. Math.\ Anal.\ Appl.\ {\bf 307} (2005), 232--244. 
 \bibitem{KY1}
    S. Kurima, T. Yokota, 
    {\it Monotonicity methods for nonlinear diffusion
equations and their approximations
with error estimates}, J. Differential Equations {\bf 263} (2017), 2024--2050.
 \bibitem{KY2}
 S. Kurima, T. Yokota, 
    {\it A direct approach to quasilinear parabolic equations 
on unbounded domains by Br\'ezis's theory for 
subdifferential operators}, Adv.\ Math.\ Sci.\ Appl., to appear. 
 \bibitem{M-2010}
    G. Marinoschi, 
    {\it Well-posedness of singular diffusion equations in 
    porous media with homogeneous Neumann boundary conditions}, 
    Nonlinear Anal.\ {\bf 72} (2010), 3491--3514.%
\bibitem{Murakawa-2007}
H. Murakawa, 
{\it Reaction-diffusion system approximation to degenerate parabolic systems},  Nonlinearity {\bf 20} (2007), 2319--2332. 
\bibitem{Murakawa-2017}
H. Murakawa, 
{\it A linear finite volume method for nonlinear cross-diffusion systems},  
Numer.\ Math.\ {\bf 136} (2017), 1--26. 
 \bibitem{OSY}
 N. Okazawa, T. Suzuki, T. Yokota, 
{\it Energy methods for abstract nonlinear Schr\"odinger equations}, 
 Evol.\ Equ.\ Control Theory {\bf 1} (2012), 337--354.
 \bibitem{RV-2002}
    A. Rodriguez, J. L. V\'azquez, 
    {\it Obstructions to existence in fast-diffusion equations}, 
    J. Differential Equations {\bf 184} (2002), 348--385. 
 \bibitem{V-2007}
    J. L. V\'azquez, 
    ``The Porous Medium Equation'', 
    Oxford Mathematical Monographs, 
    The Clarendon Press, Oxford University Press, Oxford, 2007. 
 \bibitem{Y-2008}
    H. -M. Yin, 
    {\it On a degenerate parabolic system}, 
    J. Differential Equations {\bf 245} (2008), 722--736. 
 \end{thebibliography}
\end{document}